\documentclass[12pt]{amsart}
\usepackage{wasysym}
\usepackage{amsmath}
\usepackage{amssymb}
\usepackage{amsthm}
\usepackage{caption}
\usepackage{subcaption}
\usepackage{tikz}
\usetikzlibrary{matrix,arrows,backgrounds,shapes.misc,shapes.geometric,patterns,calc,positioning}
\usetikzlibrary{calc,shapes}
\usepackage{wrapfig}
\usepackage{epsfig}  
\usepackage[margin=1in]{geometry}
\usepackage{color}	 %
\usepackage{multicol}
\input{xy}
\xyoption{poly}
\xyoption{2cell}
\xyoption{all}

\usepackage{lscape}

\usetikzlibrary{calc,shapes}
\usetikzlibrary{matrix,arrows,backgrounds,shapes.misc,shapes.geometric,patterns,calc,positioning}
\usetikzlibrary{calc,shapes}

\newcommand{\calg}{\mathcal{G}}
\newcommand{\calh}{\mathcal{H}}
\newcommand{\call}{\mathcal{L}}

\makeatletter
\def\Ddots{\mathinner{\mkern1mu\raise\p@
		\vbox{\kern7\p@\hbox{.}}\mkern2mu
		\raise4\p@\hbox{.}\mkern2mu\raise7\p@\hbox{.}\mkern1mu}}
\makeatother







\newtheorem{thm}{Theorem}[section]

\newtheorem{lem}[thm]{Lemma}




\theoremstyle{defn}
\newtheorem{definition}[thm]{Definition}
\newtheorem{example}[thm]{Example}




\theoremstyle{remark}


\numberwithin{equation}{section}


\DeclareMathSizes{5}{3}{2}{2}

\newcommand{\sL}{\mathcal{L}}


\newcommand\restr[2]{{
		\left.\kern-\nulldelimiterspace 
		#1 
		\vphantom{\big|} 
		\right|_{#2} 
	}}










	
	\begin{document}
		\title{F-polynomial Formula from Continued Fractions}
		\subjclass[2000]{Primary: 13F60, 
			Secondary: 11A55 
			and  30B70
		}
		\keywords{Cluster algebras, continued fractions, snake graphs, F-polynomial}
		\author{Michelle Rabideau}\thanks{The author was supported by the NSF-CAREER grant  DMS-1254567, and by the University of Connecticut.}
		\address{Department of Mathematics,
			University of Connecticut,
		Storrs, Connecticut 06269, USA}
		\email{Michelle.Rabideau@uconn.edu}

		
		
		
		
		%
		%
		%
		\begin{abstract} 
			For cluster algebras from surfaces, there is a known formula for cluster variables and $F$-polynomials in terms of the perfect matchings of snake graphs. If the cluster algebra has trivial coefficients, there is also a known formula for cluster variables in terms of continued fractions. In this paper, we extend this result to cluster algebras with principal coefficients by producing a formula for the $F$-polynomials in terms of continued fractions.
			
		\end{abstract}

		\maketitle
		
		\setcounter{tocdepth}{1}

		
		\section{Introduction}
		
		Cluster algebras were introduced in 2002 by Fomin and Zelevinsky \cite{FZ1}. These cluster algebras are commutative rings  $\mathcal{A}(\mathbf{x},\mathbf{y}, Q)$ that depend on the initial cluster variables $\mathbf{x} = (x_1, \dots, x_d)$, cluster coefficients $\mathbf{y} = (y_1,\dots, y_d)$ and a quiver $Q$. The combinatorial structure of these rings is based on a recursive operation called mutation.  In this paper, we focus on cluster algebras with principal coefficients because a cluster variable in an arbitrary cluster algebra can be computed from a cluster variable in a corresponding cluster algebra with principal coefficients \cite{FZ4}.

		Each iteration of a mutation yields a cluster variable in the form of a Laurent polynomial in terms of the initial cluster variables $x_1,\dots, x_d$. The coefficients of the terms in this Laurent polynomial are monomials in the cluster coefficients. For each cluster variable there exists an $F$-polynomial obtained from the cluster variable by setting each initial cluster variable, $x_i$, equal to one. It was proved in \cite{FZ4} that $F$-polynomials are polynomials in the cluster coefficients.
		
		For cluster algebras of surface type, there is a bijection between the arcs $\gamma$ on the surface and the cluster variables of the cluster algebra \cite{FST}. From an arc $\gamma$ in a triangulated surface, one can construct a labeled snake graph \cite{MSW}. For our purposes, the labeling of the snake graph is unimportant, however each numbered tile $i$ corresponds to the principal coefficient with that subscript, $y_i$. This snake graph is useful because there is an expansion formula that gives the cluster variable $x_\gamma$ as a sum over the perfect matchings of the snake graph \cite{MSW}. Since the cluster variable $x_\gamma$ can be obtained from the perfect matchings,  so can the $F$-polynomial of $x_\gamma$. 
		
		Theorem 4.1 in \cite{CS4} tells us there is a bijection between positive continued fractions and snake graphs. In the same paper the authors give a formula for the cluster variable of a labeled snake graph in terms of a continued fraction of Laurent polynomials in $x_1,\dots,x_n$. However, this formula only works for cluster algebras with trivial coefficients. In this paper, we extend this result to cluster algebras with principal coefficients by giving an explicit formula for the $F$-polynomial as a continued fraction of Laurent polynomials in $y_1,\dots,y_n$.

		
			\section{Preliminaries}\label{sect prelim}
			A finite  continued fraction 
			$$[a_1,\dots, a_n]  := a_1 + \cfrac{1}{a_{2} + \cfrac{1}{\ddots + \cfrac{1}{a_n}}}$$\vspace{10pt}\\
			is said to be positive if each $a_i$ is a positive integer. We define $N[a_1,\dots,a_n]$ by the recursion $N[a_1,\dots,a_n] = a_n N[a_1,\dots,a_{n-1}]+N[a_,\dots,a_{n-2}]$ where $N[a_1]=a_1$. It will become useful for us to have the following definition. For a finite, positive continued fraction $[a_1,\dots,a_n]$, define $$\ell_i = \sum_{s=1}^i a_s$$ for $i >0$ and  $\ell_0 =0$.

			We begin by recalling the definition of snake graphs and some of their properties. First, a tile $G$ is a fixed length square in the plane. The sides of a tile are orthogonal or parallel to a fixed basis, so it is natural for one to call the lower edge the south edge, etc. Each tile can either be placed to the right or above the previous tile. A snake graph $\calg$ is a  connected planar graph given by a sequence of tiles $G_i$, where $G_i$ and $G_{i+1}$ share exactly one edge, $e_i$. The edges $e_i$ that are contained in two tiles are called interior edges. For a snake graph made up of tiles $G_1,G_2,\dots G_d$, the interior edges are $e_1,e_2,\dots, e_{d-1}$. All of the other edges are called boundary edges. A subset $P$ of edges of $\calg$ where each vertex in $\calg$ is incident to exactly one edge in $P$ is called a perfect matching. See figure \ref{fig 234minmatch} for an example of a perfect matching on a snake graph with 8 tiles, where each tile $G_i$ is labeled by $i$.   See \cite{CS3} for a more detailed account regarding snake graphs.

		In \cite{CS4} the authors establish a bijection between snake graphs and positive continued fractions. The snake graph obtained from the continued fraction $[a_1,\dots,a_n]$ is denoted by $\calg[a_1,\dots,a_n]$ and consists of tiles $G_1,\dots,G_d$ where $d = (\ell_n)-1$. 
			The snake graph $\calg[a_1,\dots,a_n]$ has $N[a_1,\dots,a_n]$ many perfect matchings \cite{CS4}. For example the snake graph $\calg[2,2]$, has $N[2,2] = 5$ perfect matchings as seen in figure \ref{fig poset22}. These perfect matchings form a poset \cite[Section 5]{MSW2} with a unique minimal and maximal element. These minimal and maximal perfect matchings, $P_-$ and $P_+$ respectively, are the only perfect matchings consisting entirely of boundary edges. For consistency, we adopt the convention that the minimal perfect matching $P_-$ of $\calg[a_1,\dots,a_n]$ is the perfect matching consisting of only boundary edges that also contains the south edge of the first tile. 
			
			For two examples of posets of perfect matchings see figures \ref{fig poset22} and \ref{fig poset4} in example \ref{ex poset}. We can climb the poset of perfect matchings from $P$ to $P'$ by turning a tile. A tile $G$ can be turned if two of its edges are in the perfect matching $P$. By turning the tile, you replace the two edges of $G$ in $P$ with the other two edges of $G$ and in doing so obtain the perfect matching $P'$. The height $y(P)$ of $P$ is defined recursively by $y(P_-) = 1$ and if $P'$ is above $P$ and obtained by turning the tile $G_i$ then $y(P')=y_iy(P)$. The $F$-polynomial of $\calg$ is defined as $F(\calg) = \sum_P y(P)$, where the sum is over all perfect matchings of $\calg$. It was shown in \cite{MSW} that if $\calg$ is the snake graph of a cluster variable in a cluster algebra from a surface then the $F$-polynomial of that cluster variable is $F(\calg)$ (or a specialization of $F(\calg)$ if the corresponding arc crosses a  self-folded triangle).
			
				\begin{example}	\label{ex poset}
				
				\begin{figure}
					\centering
					\begin{minipage}{.6\textwidth}
						\centering
					\scalebox{0.8}{\includegraphics[width=.8\linewidth]{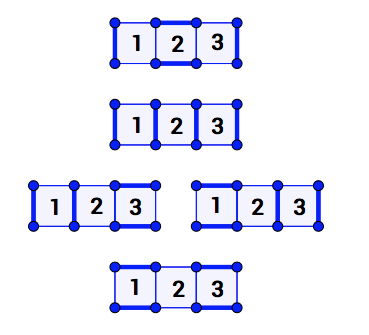}}
						\captionof{figure}{$\calg[2,2]$}
						\label{fig poset22}
					\end{minipage}%
					\begin{minipage}{.4\textwidth}
						\centering
					\scalebox{0.8}	{\includegraphics[width=.5\linewidth]{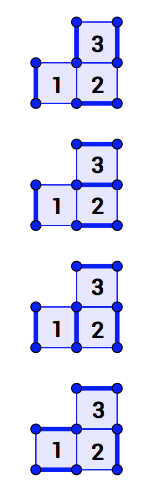}}
						\captionof{figure}{$\calg[4]$}
						\label{fig poset4}
					\end{minipage}
				\end{figure}
				
				In figures \ref{fig poset22} and \ref{fig poset4} below, we give the poset of perfect matchings on the snake graphs $\calg[2,2]$ and $\calg[4]$ respectively, where $P_-$ is at the bottom of the poset and $P_+$ is at the top.   We can use these posets to compute the $F$ polynomials, $F(\calg[2,2]) = 1+y_1+y_3 + y_1y_3 +y_1y_2y_3$ and $F(\calg[4]) = 1+y_1+y_1y_2+y_1y_2y_3$. The snake graph in figure \ref{fig 234minmatch} has $N[2,3,4] = 30$ perfect matchings an the $F$-polynomial is given in example \ref{ex 234} using the main result of this paper. 
				
				 Notice that the poset for $\calg[4]$ is linear, but the poset for $\calg[2,2]$ is not. The snake graph, $\calg[4]$ is a zigzag snake graph, meaning that the placement of tiles alternates between above and to the right of the previous tile. In general, a continued fraction with a single entry yields a zigzag snake graph, $\calg[a_i]$, which always has a linear poset of perfect matchings.
			
		\end{example}
		
				Under the correspondence between continued fractions and snake graphs each $a_i$ in the continued fraction $[a_1,\dots,a_n]$ corresponds to a zigzag subgraph $\calh_i$ of $\calg[a_1,\dots,a_n]$. The subgraph $\calh_i$ is isomorphic to $\calg[a_i]$, but inherits its tile labels from $\calg[a_1,\dots,a_n]$, thus consisting of tiles $G_{(\ell_{i-1})+1},\dots, G_{(\ell_i)-1}$. In figure \ref{fig subgraph} we show the subgraphs $\calh_1,\calh_2,\calh_3$ of the snake graph $\calg[2,3,4]$. Notice that $\calh_3$ is isomorphic to $\calg[4]$, shown in figure \ref{fig poset4}.
			
			The minimal matching, $P_-^i$, of $\calh_i$ is the completion of the matching it inherits from the minimal matching of $\calg[a_1,\dots,a_n]$. Technically speaking, if $P_-$ is the minimal perfect matching of $\calg[a_1,\dots,a_n]$ then $P_-^i$ is given by first collecting all edges in $P_-$ that are also in $\calh_i$, denoted by $P_-|_{\calh_i}$. Then, if this is not already a perfect matching, we must include exactly one other edge as follows.

				\begin{equation*}
				P_-^i = \left\{
				\begin{array}{rl}
				P_-|_{\calh_i} & \text{if } i =1, \text{ or if $i=n$ is even,}\\
				P_-|_{\calh_i} \cup e_{\ell_{i-1}} & \text{if $i$ is odd},\\
				P_-|_{\calh_i} \cup e_{(\ell_i)-1} & \text{if $i$ is even.}\\
				\end{array} \right.
				\end{equation*}\vspace{10pt}
				
				\begin{example}\label{ex 234minmatch}
					In figure \ref{fig 234minmatch}, the minimal matching $P_-$ of $\calg[2,3,4]$ is shown. The snake graph of $[2,3,4]$ has three subgraphs, $\calh_1,\calh_2$ and $\calh_3$. From the minimal perfect matching of $\calg[a_1,\dots,a_n]$ in figure \ref{fig 234minmatch} we obtain $P_-|_{\calh_i}$.  Notice that in figure \ref{fig subgraph} we show these subgraphs with their completed minimal perfect matchings, $P^i_-$.


						\begin{figure}
							\centering
							\begin{minipage}{.5\textwidth}
								\centering
							\scalebox{0.9}	{\includegraphics[width=.8\linewidth]{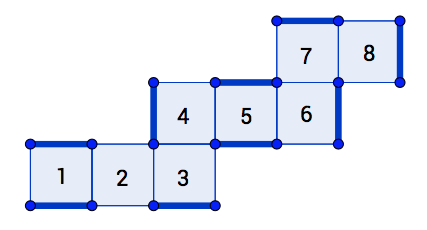}}
								\captionof{figure}{The minimal perfect matching $P_-$ of $\calg[2,3,4]$. } 
							\label{fig 234minmatch}
							\end{minipage}%
							\begin{minipage}{.5\textwidth}
								\centering
								\scalebox{0.9}{\includegraphics[width=.8\linewidth]{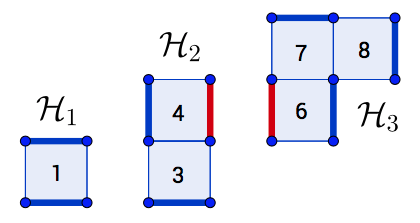}}
								\captionof{figure}{The completed minimal perfect matchings, $P^i_-$ of $\calh_i$ are shown. }
								\label{fig subgraph}
							\end{minipage}
						\end{figure}

					\end{example} 
	In \cite{CS,CS2,CS3} identities in the cluster algebra have been expressed in terms of snake graphs. Equation (\ref{f eqn}) below follows from the grafting with a single edge formula from Theorem 7.3 of \cite{CS2}, where $n\geq2$ and the grafting takes place at tile $G_{\ell_{n-1}}$.
									
		\begin{eqnarray} \label{f eqn} 
		F(\calg[a_1,\dots,a_n])  &= &y_{34}F(\calg[a_1,\dots,a_{n-1}])F(\calh_n) +y_{56}	F(\calg[a_1,\dots,a_{n-2}])  
		\end{eqnarray}\vspace{5pt}

	We define the variables $y_{34}$ and $y_{56}$ as follows, where $y_0 = 1$.\\
						
	\begin{center}
\begin{minipage}{3in}
\begin{equation} \label{yeqn}
  y_{34}= \left\{
    \begin{array}{rl}
    y_{\ell_{n-1}}  & \text{if } n \text{ is odd},\\
      1 & \text{if } n \text{ is even,}\\
    \end{array} \right.
\end{equation}
\end{minipage}
\begin{minipage}{3in}
\begin{equation*}
y_{56}= \left\{
\begin{array}{rl}
 1 & \text{if } n \text{ is odd},\\
 \prod_{j=\ell_{n-2}}^{(\ell_n)-1} y_j& \text{if } n \text{ is even.}\\
\end{array} \right.
\end{equation*} 
\end{minipage}
\end{center} \vspace{10pt}

	For the case when $n=2$, we define the snake graph of an empty continued fraction to be a single edge. There is only one perfect matching of a single edge, which corresponds to the $F$-polynomial 1. Therefore equation (\ref{f eqn}) for the case when $n=2$ is as follows.
	
	\begin{eqnarray} \label{f eqn2} 
	F(\calg[a_1,a_2])  &= &F(\calg[a_1])F(\calh_2) +\prod_{j=0}^{(\ell_2)-1} y_j
	\end{eqnarray}\vspace{5pt}

	Equations (\ref{f eqn}), (\ref{yeqn}) and (\ref{f eqn2}) will be used in the proof of our main theorem. Together they give an equation for the $F$-polynomial of a snake graph $\mathcal{G}[a_1,\dots, a_n]$ based on the $F$-polynomials of certain subgraphs.

		\section{Main Result}\label{sect main}
		
	\begin{definition} \label{defn correction}
		For any continued fraction $[a_1,\dots,a_n]$ with $a_1 >1$, we define an associated continued fraction of Laurent polynomials $[\sL_1,\sL_2,\dots,\sL_n]$, where each $\sL_i = \varphi_iC_i$ and \\ \vspace{10pt}
		
		\begin{minipage}{3in}
			\begin{equation*}
			C_i= \left\{
			\begin{array}{ll}
			\displaystyle  \prod_{j=1}^{\ell_{i-1}} y_j & \text{if } i \text{ is odd},\\[.7cm]
			\displaystyle \prod_{j=1}^{(\ell_i)-1}y_j^{-1} & \text{if } i \text{ is even,}\
			\end{array} \right.
			\end{equation*} \end{minipage}
		\begin{minipage}{3in}
			\begin{equation*}
			\varphi_i= \left\{
			\begin{array}{ll}
			\displaystyle \sum_{k=\ell_{i-1}}^{(\ell_i) -1}\hspace{5pt} \prod_{j=(\ell_{i-1})+1}^{k} y_j  & \text{if } i \text{ is odd},\\[.9cm]
			\displaystyle   \sum_{k=(\ell_{i-1})+1}^{\ell_i}\hspace{5pt} \prod_{j=k}^{(\ell_i)-1} y_j & \text{if } i \text{ is even.}\
			\end{array} \right.
			\end{equation*}
		\end{minipage}\\
	\end{definition}
	
	Next we will prove a lemma that is crucial in the proof of our main theorem. However, to highlight the concept of Lemma \ref{lemma}, we will first give an example. 
	
		\begin{example}
			Again consider the continued fraction $[2,3,4]$ from the previous example \ref{ex 234minmatch}. Here we consider the zigzag snake graphs $\calh_i$ that are subgraphs of $\calg[a_1,\dots,a_n]$. For $\calh_1$, the minimal matching shown in figure \ref{fig subgraph} corresponds to the constant term of the $F$-polynomial, 1, and the maximal matching we obtain by turning tile 1 corresponds to the term $y_1$ in the $F$-polynomial. For $\calh_2$, we have the minimal matching associated to 1, the matching obtained from turning tile 4 associated to  $y_4$ and the maximal matching obtained from turning tile 4 and then tile 3 associated to $y_4y_3$. Together these form the $F$-polynomial $1+y_4+y_4y_3$. The $F$-polynomial of each subgraph $\calh_1,\calh_2$ and $\calh_3$ is given below.
			\begin{center}	$F(\calh_1) = 1+y_1 = \displaystyle \sum_{k=\ell_{0}}^{(\ell_1) -1}\hspace{5pt} \prod_{j=(\ell_{0})+1}^{k} y_j  =\varphi_1$\vspace{10pt} \\
				 $F(\calh_2) = 1+y_4+y_4y_3 = 	\displaystyle   \sum_{k=(\ell_{1})+1}^{\ell_2}\hspace{5pt} \prod_{j=k}^{(\ell_2)-1} y_j  = \varphi_2$ \vspace{10pt} \\
				$F(\calh_3 ) = 1+y_6+y_6y_7+y_6y_7y_8= \displaystyle \sum_{k=\ell_{2}}^{(\ell_3) -1}\hspace{5pt} \prod_{j=(\ell_{2})+1}^{k} y_j  = \varphi_3$\\
			\end{center}
		\end{example}
	\vspace{10pt}

		\begin{lem}
			$F(\calh_i) = \varphi_i$,  \hspace{3pt} for all \hspace{3pt}$1\leq i \leq n$. \label{lemma}
		\end{lem}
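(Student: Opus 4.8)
The plan is to prove $F(\calh_i) = \varphi_i$ directly by analyzing the poset of perfect matchings of the zigzag subgraph $\calh_i$, using the fact (stated in Example \ref{ex poset}) that a zigzag snake graph has a \emph{linear} poset. Since $\calh_i \cong \calg[a_i]$ is a zigzag graph, its perfect matchings form a chain $P_-^i = P_0 \prec P_1 \prec \cdots \prec P_{m}$, where $m = a_i$ is the number of tiles in $\calh_i$ (recall $\calh_i$ consists of tiles $G_{(\ell_{i-1})+1},\dots,G_{(\ell_i)-1}$, so it has $(\ell_i)-1-\ell_{i-1} = a_i$ tiles, wait — one must be careful here, as $\calg[a_i]$ has $a_i$ tiles but the indexing runs over $a_i$ consecutive labels). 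Because the height is multiplicative along covering relations, each term of $F(\calh_i) = \sum_P y(P)$ is a product of consecutive $y_j$'s obtained by successively turning tiles up the chain, and the definition $F(\calh_i)=\sum_P y(P)$ with $y(P_-^i)=1$ produces exactly a telescoping sum of such products.

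The main step is to pin down \emph{which} consecutive tiles get turned and in what order, since this determines the precise indices appearing in each monomial, and to check that the resulting sum matches the closed form of $\varphi_i$ in Definition \ref{defn correction}. First I would fix the tile labels of $\calh_i$ explicitly: they are $G_{(\ell_{i-1})+1},\dots,G_{(\ell_i)-1}$. Then I would identify the completed minimal matching $P_-^i$ from its defining formula, and determine the unique sequence of tile-turns climbing from $P_-^i$ to the maximal matching $P_+^i$. In a zigzag graph each turn is forced (the poset is a chain), so starting from $P_-^i$ one turns tiles one at a time; the key observation is that the height picks up one new factor $y_j$ at each step, with the index $j$ running monotonically over the tile labels of $\calh_i$. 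I would then read off the partial products and compare with the two cases of $\varphi_i$.

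The case split by parity of $i$ is where the real content lies, and I expect it to be the main obstacle. For $i$ odd, the minimal matching is $P_-|_{\calh_i}$ (possibly with $e_{\ell_{i-1}}$), and the tiles turn in increasing index order, so the heights give $\prod_{j=(\ell_{i-1})+1}^{k} y_j$ as $k$ ranges from $\ell_{i-1}$ (empty product, giving the constant term $1$) up to $(\ell_i)-1$; this is exactly the odd-case formula for $\varphi_i$. For $i$ even, the completed minimal matching differs ($P_-|_{\calh_i}\cup e_{(\ell_i)-1}$, and for $i=n$ even no extra edge), and the tiles turn in \emph{decreasing} index order, which is why the even-case sum is indexed as $\prod_{j=k}^{(\ell_i)-1} y_j$ with $k$ descending from $\ell_i$ to $(\ell_{i-1})+1$. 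The subtle point is that the orientation of the zigzag — whether $\calh_i$ sits as an ``upward'' or ``rightward'' staircase inside $\calg[a_1,\dots,a_n]$ — alternates with the parity of $i$, and this alternation is precisely what flips both the choice of minimal matching and the direction of the turning sequence. I would verify the boundary/edge-case behavior (the subgraph containing one fewer interior edge when $i=1$ or when $i=n$ is even, as reflected in the piecewise definition of $P_-^i$) separately to make sure the endpoints of the index ranges in $\varphi_i$ come out correctly, and cross-check against the worked computations $F(\calh_1),F(\calh_2),F(\calh_3)$ for $[2,3,4]$ in the preceding example.
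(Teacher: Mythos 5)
Your proposal takes essentially the same approach as the paper's proof: both arguments use the fact that the zigzag subgraph $\calh_i$ has a linear poset of perfect matchings, start from the completed minimal matching $P_-^i$, and observe that the tiles turn in increasing index order when $i$ is odd and decreasing order when $i$ is even, so that the partial products of heights telescope into the two cases of $\varphi_i$. The plan is sound (modulo the small off-by-one you flag yourself: $\calh_i$ has $a_i-1$ tiles and hence $a_i$ matchings in the chain), and the parity-dependent turning direction you identify as the crux is exactly the content of the paper's argument.
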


		\begin{proof}
			Let $i$ be odd. The tiles $G_{(\ell_{i-1})+1},\dots, G_{(\ell_i)-1}$ make up $\calh_i$. The subgraph $\calh_i$ is the zigzag snake graph $\calg[a_i]$ with the inherited labeling and completed minimal perfect matching $P_-^i$ discussed previously. In this completion, the first tile of the subgraph $G_{(\ell_{i-1})+1}$ can be turned immediately and is the only such tile. If we turn tile $G_{(\ell_{i-1})+1}$, we can then turn the next tile $G_{(\ell_{i-1})+2}$ and so on. Therefore $$F(\calh_n) = 1 + y_{(\ell_{i-1})+1} + y_{(\ell_{i-1})+1}y_{(\ell_{i-1})+2} +\cdots + y_{(\ell_{i-1})+1}\cdots y_{(\ell_i)-1} = \sum_{k=\ell_{i-1}}^{(\ell_i) -1}\hspace{5pt} \prod_{j=(\ell_{i-1})+1}^{k} y_j = \varphi_i.$$
			
			Let $i$ be even. In this case, the very last tile of $\calh_i$, $G_{(\ell_i)-1}$ has two edges in the minimal matching of the completion, $P_-^i$, and can be turned. Therefore in order to determine the $F$-polynomial of $\calh_i$, we must first turn the last tile and work our way down the snake graph. 
			$$F(\calh_i) = 1+ y_{(\ell_i)-1} + y_{(\ell_i)-1}y_{(\ell_i)-2} + \cdots + y_{(\ell_i)-1}\cdots y_{(\ell_{i-1})+1} = \sum_{k=(\ell_{i-1})+1}^{\ell_i}\hspace{5pt} \prod_{j=k}^{(\ell_i)-1} y_j  = \varphi_i \qedhere$$ 
		\end{proof}\vspace{10pt}
		

	We are now ready to prove the main result.
	
	\begin{thm} \label{main thm}
		The F-polynomial associated to the snake graph of the continued fraction $[a_1,\dots,a_n]$ denoted by $F(\calg[a_1,\dots,a_n])$ is given by the equation:\\
		\begin{equation*}
		F(\calg[a_1,\dots,a_n]) = \left\{
		\begin{array}{ll}
		N[\sL_1,\sL_2,\dots,\sL_n] & \text{if } n \text{ is odd},\\[.7cm]
		C_n^{-1} {N[\sL_1,\sL_2,\dots,\sL_n]} & \text{if } n \text{ is even,}\
		\end{array} \right.
		\end{equation*}\\
		where $N[\sL_1,\sL_2,\dots,\sL_n]$ is defined by the recursion $N[\sL_1,\sL_2,\dots,\sL_n] = \sL_n N[\sL_1,\sL_2,\dots,\sL_{n-1}] + N[\sL_1,\sL_2,\dots,\sL_{n-2}] $ and $N[\sL_1] = \sL_1$. \\
	\end{thm}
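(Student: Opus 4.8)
The plan is to prove the formula by strong induction on $n$, using the grafting recursion (\ref{f eqn}) (and its specialization (\ref{f eqn2}) for $n=2$) to drive the inductive step, and Lemma \ref{lemma} to replace each $F(\calh_i)$ by $\varphi_i$. For the base case $n=1$ I would observe that $C_1 = \prod_{j=1}^{\ell_0} y_j$ is an empty product, hence $C_1 = 1$ and $\sL_1 = \varphi_1 = F(\calh_1) = F(\calg[a_1]) = N[\sL_1]$. For $n=2$ I would expand $C_2^{-1}N[\sL_1,\sL_2] = C_2^{-1}(\sL_1\sL_2 + 1)$; since $\sL_1\sL_2 = \varphi_1\varphi_2 C_1C_2$ with $C_1=1$ and $C_2^{-1} = \prod_{j=1}^{(\ell_2)-1} y_j$, this collapses to $\varphi_1\varphi_2 + \prod_{j=1}^{(\ell_2)-1} y_j$, which is exactly the right-hand side of (\ref{f eqn2}) once Lemma \ref{lemma} and $F(\calg[a_1]) = F(\calh_1)$ are applied.

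For the inductive step I would split on the parity of $n$, substitute the induction hypotheses at $n-1$ and $n-2$ into (\ref{f eqn}), and compare the result with the continuant recursion $N[\sL_1,\dots,\sL_n] = \sL_n N[\sL_1,\dots,\sL_{n-1}] + N[\sL_1,\dots,\sL_{n-2}]$. When $n$ is odd, the hypotheses read $F(\calg[a_1,\dots,a_{n-1}]) = C_{n-1}^{-1} N[\sL_1,\dots,\sL_{n-1}]$ and $F(\calg[a_1,\dots,a_{n-2}]) = N[\sL_1,\dots,\sL_{n-2}]$, while $y_{34} = y_{\ell_{n-1}}$ and $y_{56}=1$; after using Lemma \ref{lemma} the agreement of the two expressions reduces to the single identity $y_{\ell_{n-1}} C_{n-1}^{-1} = C_n$. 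When $n$ is even, the hypotheses read $F(\calg[a_1,\dots,a_{n-1}]) = N[\sL_1,\dots,\sL_{n-1}]$ and $F(\calg[a_1,\dots,a_{n-2}]) = C_{n-2}^{-1} N[\sL_1,\dots,\sL_{n-2}]$, while $y_{34}=1$ and $y_{56} = \prod_{j=\ell_{n-2}}^{(\ell_n)-1} y_j$; since $C_n^{-1}\sL_n = \varphi_n$ the leading terms already match, and the remaining comparison reduces to $\bigl(\prod_{j=\ell_{n-2}}^{(\ell_n)-1} y_j\bigr) C_{n-2}^{-1} = C_n^{-1}$.

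Both identities are telescoping products that follow directly from the parity-dependent definition of $C_i$. In the odd case $C_{n-1}^{-1} = \prod_{j=1}^{(\ell_{n-1})-1} y_j$, so multiplying by $y_{\ell_{n-1}}$ extends the range to $\prod_{j=1}^{\ell_{n-1}} y_j = C_n$; in the even case $C_{n-2}^{-1} = \prod_{j=1}^{(\ell_{n-2})-1} y_j$, and the factor $\prod_{j=\ell_{n-2}}^{(\ell_n)-1} y_j$ extends the range to $\prod_{j=1}^{(\ell_n)-1} y_j = C_n^{-1}$. I expect the main obstacle to be purely bookkeeping: the three gadgets $C_i$, $y_{34}$, and $y_{56}$ are each defined by parity, so the delicate part is aligning which parity of $n$ pairs with which cases of the hypotheses at $n-1$ and $n-2$, and confirming that the exponent ranges telescope exactly with no off-by-one error at the boundaries $\ell_{n-1}$, $\ell_{n-2}$, and $(\ell_n)-1$. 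Once these two product identities are verified, the grafting recursion and the continuant recursion coincide term by term and the induction closes.
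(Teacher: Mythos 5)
Your proposal is correct and follows essentially the same route as the paper's proof: induction on $n$ with base cases $n=1,2$, the grafting recursions (\ref{f eqn}) and (\ref{f eqn2}), Lemma \ref{lemma} to substitute $\varphi_i$ for $F(\calh_i)$, and the two parity-dependent telescoping identities $y_{\ell_{n-1}}C_{n-1}^{-1}=C_n$ and $\bigl(\prod_{j=\ell_{n-2}}^{(\ell_n)-1} y_j\bigr)C_{n-2}^{-1}=C_n^{-1}$, both of which you verify correctly. No gaps.
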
 
	
	\begin{proof}
		Proof by induction. Let $n=1$. In the case where $n$ is odd, by equation (\ref{yeqn}), we have $y_{34} =y_{\ell_{n-1}}$ and $y_{56}=1$. It is clear that $F(\calg[a_1]) = F(\calh_1)$ simply because $\calg[a_1]$ and $\calh_1$ are the same snake graph. Then, by Lemma \ref{lemma} we have $F(\calh_1) = \varphi_1$. Note that $C_1 =  \prod_{j=1}^{0} y_j  = 1$ because it is an empty product. Therefore $\varphi_1 = C_1\varphi_1 = \call_1 = N[\call_1]$. Thus we have shown that $F(\calg[a_1]) =N[\call_1]$.\\
		
		Let $n=2$. In this case we use equation (\ref{f eqn2}) and note that $\prod_{j=0}^{(\ell_2)-1} y_j = C_2^{-1}$.
		
		\begin{eqnarray*}
			F(\calg[a_1,a_2])  &= &F(\calg[a_1])F(\calh_2) +\prod_{j=0}^{(\ell_2)-1} y_j\\ [1em]
		&=& F(\calg[a_1])F(\calh_2) +C_2^{-1} \\	\end{eqnarray*}
	Using Lemma \ref{lemma} and the case $n=1$, we see that the right hand side is equal to $\varphi_2 N[\sL_1]+C_2^{-1}$ and this is equal to the following, where the second equation holds by definition \ref{defn correction} and the last equation holds by the definition of $N[\sL_1,\sL_2]$.
			
		\begin{eqnarray*}	
		F(\calg[a_1,a_2])	&=& C_2^{-1}\left(C_2\varphi_2N[\call_1] +1\right)\\[.5em]
			&=& C_2^{-1}\left(\sL_2N[\call_1] +1\right)\\[.5em]
			&=&C_2^{-1}N[\call_1,\call_2]\\
		\end{eqnarray*}
		
		Now let $n>2$ be odd. Assume that for all $m <n$ our statement holds. In this situation by equation (\ref{yeqn}), $y_{34} =y_{\ell_{n-1}}$ and $y_{56}=1$.  Additionally, we know from equation (\ref{f eqn}) that the F-polynomial of $\calg[a_1,\dots,a_n]$ is given by the following.
		
		\[	F(\calg[a_1,\dots,a_n])  = y_{\ell_{n-1}}F(\calg[a_1,\dots,a_{n-1}])F(\calh_n) +	F(\calg[a_1,\dots,a_{n-2}])\]\\
Applying our inductive step we obtain:

	\[	F(\calg[a_1,\dots,a_n])  = y_{\ell_{n-1}}C_{n-1}^{-1}{N[\sL_1,\sL_2,\dots,\sL_{n-1}]} F(\calh_n) +	{N[\sL_1,\sL_2,\dots,\sL_{n-2}]}\].\\
From here we can apply Lemma \ref{lemma}.
			\[F(\calg[a_1,\dots,a_n])  = y_{\ell_{n-1}} C_{n-1}^{-1}{N[\sL_1,\sL_2,\dots,\sL_{n-1}]} \varphi_n +	{N[\sL_1,\sL_2,\dots,\sL_{n-2}]}\]\\
				Using the fact that $C_n = y_{\ell_{n-1}}C_{n-1}^{-1}$ and  $\sL_n = C_n\varphi_n$ we obtain our desired result as follows.\\
	\begin{eqnarray*}
		F(\calg[a_1,\dots,a_n])  &=& C_n\varphi_nN[\sL_1,\sL_2,\dots,\sL_{n-1}] +	N[\sL_1,\sL_2,\dots,\sL_{n-2}]\\ [.5em]
		&=& \sL_nN[\sL_1,\sL_2,\dots,\sL_{n-1}] +	N[\sL_1,\sL_2,\dots,\sL_{n-2}]\\ [.5em]
		&=&N[\sL_1,\sL_2,\dots,\sL_n] \\
	\end{eqnarray*}
	
	In the case where $n>2$ is even, our argument is very similar. Assume that for all $m <n$ our statement holds. In this case, $y_{34} = 1$ and $y_{56} = \prod_{j=\ell_{n-2}}^{(\ell_n)-1} y_j$. Again, we make the corresponding replacements based on our induction hypothesis.

	\begin{eqnarray*}
		F(\calg[a_1,\dots,a_n])  &=& N[\sL_1,\dots,\sL_{n-1}]F(\calh_n) +\left(\prod_{j=\ell_{n-2}}^{(\ell_n)-1} y_j	\right)C_{n-2}^{-1}N[\sL_1,\dots,\sL_{n-2}])  \\ [.3cm]
	\end{eqnarray*}
	Then we apply Lemma \ref{lemma} and the rest follows similarly to the previous case.
	\begin{eqnarray*}
		F(\calg[a_1,\dots,a_n])  
		&=& N[\sL_1,\sL_2,\dots,\sL_{n-1}] \varphi_n +	\left(\prod_{j=\ell_{n-2}}^{(\ell_n)-1} y_j\right)C_{n-2}^{-1}N[\sL_1,\sL_2,\dots,\sL_{n-2}]\\ [.6em]
		&=& N[\sL_1,\sL_2,\dots,\sL_{n-1}] \varphi_n +C_n^{-1}N[\sL_1,\sL_2,\dots,\sL_{n-2}]\\ [.5em]
	 &=& C_n^{-1}	\left(\varphi_n C_nN[\sL_1,\sL_2,\dots,\sL_{n-1}]  +	N[\sL_1,\sL_2,\dots,\sL_{n-2}]\right)\\ [.5em]
	 &=& C_n^{-1}
		\left(\sL_nN[\sL_1,\sL_2,\dots,\sL_{n-1}]  +	N[\sL_1,\sL_2,\dots,\sL_{n-2}]\right)\\ [.5em]
		&=& C_n^{-1}N[\sL_1,\sL_2,\dots,\sL_{n}] 
	\end{eqnarray*} 
\end{proof}


		\begin{example}
			Consider the continued fraction $[2,3,4,2] = \frac{67}{29}$, then the $F$-polynomial has 67 terms. According to Theorem \ref{main thm}, since the continued fraction has an even number of entries, the F-polynomial of the snake graph $\calg[2,3,4,2]$ is given by $C^{-1}_nN[\call_1,\call_2,\call_3,\call_4]$. 

		\[	\begin{array}{ccccccl}
			\sL_1 &=& 1+y_1
		&\quad &	\sL_3 &=& (1+y_6+y_6y_7+y_6y_7y_8)(y_1y_2y_3y_4y_5)\\ [1em]
			\sL_2 &=& \dfrac{1+y_4+y_3y_4}{y_1y_2y_3y_4}
		&\quad&	\sL_4 &=& \dfrac{1+y_{10}}{y_1y_2y_3y_4y_5y_6y_7y_8y_9y_{10}}
		\end{array}\]
		
			\begin{eqnarray*}
			F(\calg[2,3,4,2])	&=&C^{-1}_nN[\call_1,\call_2,\call_3,\call_4]\\ [1em] &=&\left(\prod_{j=1}^{(\ell_4)-1}y_j\right)(\call_1\call_2\call_3\call_4 + \call_1\call_2+\call_1\call_4+\call_3\call_4 +1)\\ [1em]
				&=& (1+y_1)(1+y_4+y_3y_4)(1+y_6+y_6y_7+y_6y_7y_8)(1+y_{10})y_5 \nonumber \\  &&  +\: (1+y_1)(1+y_4+y_3y_4)y_5y_6y_7y_8y_9y_{10} + (1+y_1)(1+y_{10})\nonumber \\ &&  +\: (1+y_6+y_6y_7+y_6y_7y_8)(1+y_{10})y_1y_2y_3y_4y_5 + y_1y_2y_3y_4y_5y_6y_7y_8y_9y_{10} \\
			\end{eqnarray*}
			
		\end{example}

		\begin{example} \label{ex 234}
			Consider the continued fraction $[2,3,4]$. Since $[2,3,4] = \frac{30}{13}$, the $F$-polynomial has 30 terms.  According to Theorem \ref{main thm}, the F-polynomial of the snake graph $\calg[2,3,4]$ is given by $N[\call_1,\call_2,\call_3]$.\\
			\begin{eqnarray*}
			F(\calg[2,3,4])=	&=& N[\call_1,\call_2,\call_3] \\  [.5em]
				&=&\call_1\call_2\call_3 + \call_1+\call_3\\ [.5em]
&=& (1+y_1)(1+y_4+y_3y_4)(1+y_6+y_6y_7+y_6y_7y_8)y_5
 +(1+y_1) \\&&  +\: (1+y_6+y_6y_7+y_6y_7y_8)y_1y_2y_3y_4y_5\\
			\end{eqnarray*}

		\end{example}

		{}

	\end{document}